\documentclass[preprint]{elsarticle}
\usepackage{amsmath}
\usepackage{amsfonts}
\usepackage{amssymb}
\usepackage{amsthm}
\usepackage[pdftex,unicode]{hyperref}

\def\R{\mathbb R} 

\def\om{\omega}
\def\be{\operatorname{\beta}}

\def\al{\alpha}
\def\ga{\gamma}

\def\cF{\Phi}
\def\ph{\varphi}

\def\cl#1{\overline{#1}}
\def\clx#1#2{\overline{#2}^{#1}}
\def\int#1{\operatorname{int} (#1)}
\def\intx#1#2{\operatorname{int}_{#1} (#2)}

\def\sset#1{\{#1\}}
\def\set#1{\bbset#1\eeset}
\def\bbset#1:#2\eeset{\{#1\,:\,#2\}}

\def\bbsett#1:#2\eesett{\{#1\,:\,\text{#2}\}}

\def\la{\Lambda}

\newtheorem{assertion}{Assertion}
\newtheorem{proposition}{Proposition}
\newtheorem{theorem}{Theorem}

\newtheorem*{lemma*}{Lemma}
\newtheorem{cor}{Corollary}

\theoremstyle{definition}
\newtheorem{definition}{Definition}

\begin{document}
\begin{frontmatter}

\title {Functions on products of pseudocompact spaces}

\author{Evgenii Reznichenko} 
\ead{erezn@inbox.ru}

\address{Department of General Topology and Geometry, Mechanics and  Mathematics Faculty, 
M.~V.~Lomonosov Moscow State University, Leninskie Gory 1, Moscow, 199991 Russia}

\begin{abstract}
For every natural number $ n $, any continuous function on the product of
$ X_1 \times X_2 \times ... \times X_n $
pseudocompact spaces extends to a separately continuous function on the product
$ \be X_1 \times \be X_2 \times ... \times \be X_n $ of their Stone-Cech compactifications.
\end{abstract}
\begin{keyword}
product of spaces
  \sep
extension of functions
\sep
joint continuity
\sep
separately continuity
\sep 
\MSC[2010] 54B10 \sep 54C30 \sep 54C05 \sep 54C20 
\end{keyword}

\end{frontmatter}

\section {Introduction}

In \cite {rezn94} it was proved that any continuous function $ f: X_1 \times X_2 \to \R $,
defined on the product of pseudocompact spaces $ X_1 $ and $ X_2 $, extends to a
separately continuous function $ \hat f: \be X_1 \times \be X_2 \to \R $
defined on the product of the Stone-Cech compactifications $ \be X_1 $ and $ \be X_2 $.
In \cite {reznusp98} this result was extended to three factors:
any continuous function $ f: X_1 \times X_2 \times X_3 \to \R $,
defined on the product of pseudocompact spaces $ X_1 $, $ X_2 $, $ X_3 $, extends to
separately continuous function $ \hat f: \be X_1 \times \be X_2 \times \be X_3 \to \R $
defined on the product of Stone-Cech compactifications $ \be X_1 $, $ \be X_2 $, $ \be X_3 $.
The question was also raised of whether this result can be extended to an arbitrary finite number of factors. This paper gives a positive answer to this question: for every natural number $ n $, any continuous function on the product $ X_1 \times X_2 \times ... \times X_n $ of pseudocompact spaces can be extended to a separately continuous function on
$ \be X_1 \times \be X_2 \times ... \times \be X_n $; see  Theorem~\ref {t3}.

Recall the Gliksberg theorem \cite {gli1959}: given pseudocompact
spaces $ X $ and $ Y $, any continuous function can be extended to a continuous function defined on $ \be X \times \be Y $ if and only if $ X \times Y$ is a pseudocompact.
As shown in \cite {rezn94}, the Grothendieck theorem \cite {grot1952}
 (for every countably compact $ X $ any countably compact subspace $ Y $ of $ C_p (X) $ is compact) is, in fact, equivalent to the statement:
any separately continuous function defined on the product $ X \times Y $ of countably compact spaces can be extended to a separately
continuous function defined on $ \be X \times \be Y $.
The problem of extending functions from products of spaces to Stone-Cech compactifications was considered in detail in the case of two factors in \cite {rezn94} and in the case of a larger number of factors in \cite {reznusp98}.
We collect information about the extension of functions from a product of spaces in the following theorem.
\begin {theorem} \label {ti}
Given spaces  $ X_1 $, $ X_2 $, ..., $ X_n $, let $ X = \prod_ {i = 1} ^ n X_i $ and $ \hat X = \prod_ {i = 1} ^ n \be X_i $ be topological products, and let
$ \cF: X \to \R $ be a function.
\begin {itemize}
\item [\rm (a)] If $ X $ is pseudocompact and $ \cF $ is continuous, then $ \cF $ extends to a continuous function $ \hat \cF: \hat X \to \R $.
\item [\rm (b)] If the spaces $ X_1 $, $ X_2 $, ..., $ X_n $ are pseudocompact and $ \cF $ is continuous, then $ \cF $ extends to a separately continuous function $ \hat \cF: \hat X \to \R $.
\item [\rm (c)] If the spaces $ X_1 $, $ X_2 $, ..., $ X_n $ are countably compact and $ \cF $ is separately continuous, then $ \cF $ extends to a separately continuous function $ \hat \cF: \hat X \to \R $.
\end {itemize}
\end {theorem}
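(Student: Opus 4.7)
Part (a) is immediate from Glicksberg's theorem \cite{gli1959}: if $\prod_{i=1}^n X_i$ is pseudocompact then $\be\!\left(\prod_{i=1}^n X_i\right)=\prod_{i=1}^n \be X_i$, and any continuous $\cF$ extends by the universal property of the Stone-Cech compactification.

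For part (b) I would proceed by induction on $n$, noting that the cases $n\le 3$ are established in \cite{rezn94, reznusp98}. Assume the statement for $n-1$, and take a continuous $\cF:\prod_{i=1}^n X_i\to\R$ with each $X_i$ pseudocompact. For every $y\in X_n$, the section $\cF(\cdot,y)$ is continuous on the product of the pseudocompact spaces $X_1,\dots,X_{n-1}$, so by the inductive hypothesis it admits a separately continuous extension $\hat\cF_y:\prod_{i<n}\be X_i\to\R$. Define
\[
\tilde\cF(z_1,\dots,z_{n-1},y):=\hat\cF_y(z_1,\dots,z_{n-1}),\qquad z_i\in\be X_i,\ y\in X_n.
\]
The crucial step is to show that, for every fixed $(z_1,\dots,z_{n-1})$, the map $y\mapsto\tilde\cF(z_1,\dots,z_{n-1},y)$ is continuous on $X_n$. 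Once that is done, this section is a bounded continuous function on the pseudocompact space $X_n$, hence extends uniquely to $\be X_n$, defining $\hat\cF$ on $\prod_{i=1}^n\be X_i$. Separate continuity in the last coordinate then holds by construction, and in each $z_i$ with $i<n$ it is inherited from the $\hat\cF_y$ by a further pointwise-limit argument.

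The main obstacle is precisely this sectional continuity in $y$: the extensions $\hat\cF_y$ are produced fibrewise by a non-constructive Stone-Cech argument, so there is no a priori reason for them to depend continuously on $y$. My plan is to view the assignment $y\mapsto\hat\cF_y$ as a map from $X_n$ into the space of separately continuous functions on $\prod_{i<n}\be X_i$ with the pointwise-convergence topology, and then to use the boundedness of $\cF$ on each slice together with the pseudocompactness of $X_n$ (and, inductively, of the lower-dimensional products of $\be$'s viewed through nets in the dense subproduct) to force this map to be continuous. This should refine the two- and three-factor techniques of \cite{rezn94, reznusp98}, and is where I expect almost all of the real work to lie.

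Part (c) I would handle by the same inductive scheme, the base case $n=2$ being the Grothendieck-theorem equivalence recalled in the introduction. The sectional-continuity step in the inductive argument is now driven by sequential compactness coming from countable compactness of $X_n$ and a Grothendieck-type lemma on pointwise convergence in $C_p$, replacing the pseudocompactness machinery used in (b).
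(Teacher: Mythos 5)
Your part (a) is fine and agrees with the paper (Glicksberg's theorem, plus, for $n>2$, the fact that the product of a compact and a pseudocompact space is pseudocompact). The problem is part (b): what you have written is not a proof but a reduction of the theorem to exactly the statement that needs proving. You define the fibrewise extension $\tilde\cF(z_1,\dots,z_{n-1},y)=\hat\cF_y(z_1,\dots,z_{n-1})$ and then explicitly leave open the continuity of $y\mapsto\tilde\cF(z_1,\dots,z_{n-1},y)$ on $X_n$ for fixed $(z_1,\dots,z_{n-1})\in\prod_{i<n}\be X_i$, saying this is ``where almost all of the real work'' lies. That sectional continuity \emph{is} the main theorem of the paper; no argument is offered for it, and the vague plan (boundedness plus pseudocompactness forcing continuity of $y\mapsto\hat\cF_y$ into a space of separately continuous functions) is not how the known two- and three-factor proofs go, nor is there any indication it can be closed. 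The paper's actual route is quite different: it works with $2$-$\be$-extendable functions (all two-variable sections extend over $\be X_i\times\be X_j$, which follows from Proposition~1.12 of \cite{rezn94}), proves the key Assertion that a $2$-$\be$-extendable function on a product of \emph{three} pseudocompact factors extends separately continuously to $\be X_1\times\be X_2\times\be X_3$ --- using countably many section maps $\ph_n\colon X_1\to C_p(X_3)$ whose images are Eberlein compacta, the stability of Eberlein compacta under countable products, and the Preiss--Simon theorem \cite{ps1974} that pseudocompact subspaces of Eberlein compacta are closed --- and then extends over one $\be X_i$ at a time by an induction that preserves $2$-$\be$-extendability (Assertions~\ref{a2}--\ref{a4}, Theorem~\ref{t1}), rather than extending all $n-1$ coordinates at once as you propose. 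Without some substitute for this machinery, your inductive step is a genuine gap even at $n=3$.

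Part (c) has the same structural gap (the sectional-continuity step of your induction is again unproven), and in addition your appeal to ``sequential compactness coming from countable compactness of $X_n$'' is incorrect: countably compact spaces need not be sequentially compact. The paper avoids your difficulty altogether: for $n>2$ it combines the two-factor Grothendieck theorem (in the product formulation) with the fact that the product of a compact space and a countably compact space is countably compact (Theorem~3.16 of \cite{rezn94}), so that $\be$ can be introduced one factor at a time while the remaining product stays in the class where the two-factor theorem applies. In summary: (a) is correct, but (b) and (c) are reduced to statements you do not prove, and the missing statement in (b) is precisely the paper's main result.
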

\begin {proof}
(a) For $ n = 2 $, this is exactly the Gliksberg theorem. For $ n> 2 $, the statement follows from Gliksberg theorem and the fact that the product of a compact space and a pseudocompact space is a pseudocompact space.

(b) This is Theorem \ref {t3}, the main result of this paper.

(c) For $ n = 2 $, this is exactly the Grothendieck theorem (in the product formulation). For $ n> 2 $, the assertion follows from Grothendieck theorem and the fact that the product of a compact space and a countably compact space is a countably compact space (Theorem 3.16 of \cite {rezn94}).
\end {proof}

Assertion (b) of Theorem \ref {ti} cannot be strengthened even for $ n = 2 $; see~\cite {rezn94}.
\begin {itemize}
\item [(a)] {\it There are countably compact spaces $ X_1 $ and $ X_2 $ and a continuous function $\cF: X_1\times X_2\to \R$ such that $ \cF $ does not extend to a continuous function on $ \be X_1 \times \be X_2 $.}
It suffices to take countably compact $ X_1 $ and $ X_2 $ such that $ X_1 \times X_2 $ are not pseudocompact (see \cite{Novak1953,rezn2020}) and use Gliksberg theorem to find $\cF: X_1\times X_2\to \R$ such that $ \cF $ does not extend to a  continuous function on $ \be X_1 \times \be X_2$.
\item [(b)] {\it There are pseudocompact spaces $ X_1 $ and $ X_2 $ and a separately continuous function $\cF: X_1\times X_2\to \R$ such that $ \cF $ does not extend to a separately continuous function on $ \be X_1 \times \be X_2 $.}
Indeed, it suffices to take a pseudocompact Tychonoff space $ X_1 $  all of whose countable subsets of which are closed and $C^*$-embedded; (see \cite {sha1986}), put $ X_2 = C_p (X_1, [0,1])$, and define $ \cF $: $ \cF (x, f) = f (x) $ for $ x \in X_1 $ and $ f \in X_2 $.
\end {itemize}

\section {Notation and preliminaries}

All spaces are assumed to be Tychonoff.
The set of nonnegative integers is denoted by $ \om $.
If $ X $ is a space and $ M \subset X $, then we denote by $ \clx XM $ the closure of $ M $
and $ \intx XM $ is the interior of $ M $. If it is clear from the context 
what space $ X $ we are talking about, then we write $ \cl M $ and $ \int M $
instead of $ \clx XM $ and $ \intx XM $.

Let $ A $ be a set and let  $ \set {X_ \al: \al \in A} $ a family of spaces. For $ B \subset A $, $ X = \prod _ {\al \in A} X_ \al $ and 
$ y \in \prod _ {\al \in A \setminus B} X_ \al $,
we denote by $ \pi ^ X_B: X \to \prod _ {\al \in B} X_ \al $ the natural projection. For $ f \in \R ^ X $, we
denote by $ r (f, X, y) $ the function $ g: \prod _ {\al \in B} X_ \al \to \R $ such that $ g (x) = f (z) $,
where $ z \in X $, $ \pi_ {A \setminus B} (z) = y $, and $ x = \pi_B ^ X (z) $. Note that
$$
r (f, X, \pi ^ X_ {A \setminus B} (z)) (\pi ^ X_B (z)) = f (z).
$$
We denote by $ p (X, y) $ the mapping $ h: \prod _ {\al \in B} X_ \al \to X $ defined by $ h (x) = z $,
where $ z \in X $, $ \pi_ {A \setminus B} (z) = y $, and $ x = \pi_B ^ X (z) $.
Note that
\begin {gather*}
p (X, \pi ^ X_ {A \setminus B} (z)) (\pi ^ X_B (z)) = z,
\\
r (f, X, y) (x) = f (p (X, y) (x)).
\end {gather*}
We denote by $ \la_B ^ X $ the natural bijection
\[
\la_B ^ X: \R ^ X \to (\R ^ {\prod _ {\al \in A \setminus B} X_ \al}) ^ {\prod _ {\al \in B} X_ \al}.
\]
The map $ \la_B ^ X $ is a homeomorphism.

For  spaces $ Y $ and $ Z $, we denote by $ C (Y, Z) $ the set of continuous functions from $ Y $ to $ Z $; we put
$ C (Y) = C (Y, \R) $.
We denote by $ SC (X, Z) $ separately continuous functions from $ X $ to $ Z $,
$ SC (X) = SC (X, \R) $.

Denote by $ C_p (Y, Z) $, $ C_p (Y) $, $ SC_p (X, Z) $ and $ SC_p (X) $ the function
spaces  $ C (Y, Z) $, $ C (Y) $, $ SC (X, Z) $ and $ SC (X) $, respectively,
in the topology of pointwise convergence.

Note that
\begin {gather*}
SC_p (X) \subset \R ^ X,
\\
SC_p (\prod _ {\al \in B} X_ \al, SC_p (\prod _ {\al \in A \setminus B} X_ \al)) \subset
(\R ^ {\prod _ {\al \in A \setminus B} X_ \al}) ^ {\prod _ {\al \in B} X_ \al}.
\end {gather*}
The map $ \la_B ^ X $ is a homeomorphism of $ SC_p (X) $ onto
\[
SC_p (\prod _ {\al \in B} X_ \al, SC_p (\prod _ {\al \in A \setminus B} X_ \al)).
\]

Using Lemma 3.7~of \cite {reznusp98}, we reformulate Proposition 3.1 of \cite {reznusp98} as follows (Assertion 1.4 \cite {rezn94}).

\begin {proposition}[Proposition 3.1 \cite {reznusp98}, Assertion 1.4 \cite {rezn94}] \label {p1}
Suppose that $ X_1 $ and $ X_2 $ are pseudocompact spaces, $ X = X_1 \times X_2 $, $ \cF \in SC (X) $ and 
$ \ph = \la _ {\sset 1} ^ X (\cF): X_1 \to C_p (X_2) $.
Then the following conditions are equivalent:
\begin {itemize}
\item [\rm(a)] $ \cF $ can be extended to a separately continuous function on $ X_1 \times \be X_2 $;
\item [\rm(b)] $ \cF $ can be extended to a separately continuous function on $ \be X_1 \times X_2 $;
\item [\rm(c)] $ \cF $ can be extended to a separately continuous function on $ \be X_1 \times \be X_2 $;
\item [\rm(d)] the closure of $ \ph (X_1) $ in $ C_p (X_2) $ is compact;
\item [\rm(e)] $ \ph (X_1) $ is compact;
\item [\rm(f)] $ \ph (X_1) $ is an Eberlein compact space.
\end {itemize}
\end {proposition}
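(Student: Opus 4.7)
The plan is to recast the separate-continuity extension problem in terms of continuity of an auxiliary map into a function space, and then close a cycle of implications, with $(a)\Leftrightarrow (b)$ handled by symmetry. The starting observation is that, since $\cF$ is separately continuous, the map $\ph=\la_{\sset 1}^X(\cF):X_1\to C_p(X_2)$ is continuous; moreover, each $\ph(x_1)\in C(X_2)$ is bounded (by pseudocompactness of $X_2$) and so extends uniquely to $\hat\ph(x_1)\in C(\be X_2)$. Condition (a) is then equivalent to continuity of the induced map $\hat\ph:X_1\to C_p(\be X_2)$, because a separately continuous extension of $\cF$ to $X_1\times\be X_2$ is nothing but the evaluation $(x_1,y)\mapsto\hat\ph(x_1)(y)$.

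For $(a)\Rightarrow(d)$, continuity of $\hat\ph$ and pseudocompactness of $X_1$ make $\hat\ph(X_1)$ pseudocompact in $C_p(\be X_2)$; since $\be X_2$ is compact, Grothendieck's theorem gives that $\cl{\hat\ph(X_1)}$ is compact in $C_p(\be X_2)$, and its image under the continuous restriction map $C_p(\be X_2)\to C_p(X_2)$ is a compact set containing $\ph(X_1)$, so $\cl{\ph(X_1)}\subset C_p(X_2)$ is compact. The equivalence $(e)\Leftrightarrow(f)$ I would obtain from the classical characterization of Eberlein compacta: a compact $K\subset C_p(X_2)$ lifts through the restriction bijection to a compact $\hat K\subset C_p(\be X_2)$, and compact subsets of $C_p(K)$ for compact $K$ are precisely Eberlein (via the Eberlein-Smulyan theorem together with uniform boundedness arguments). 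The step $(d)\Rightarrow(e)$ invokes the nontrivial theorem that pseudocompact subspaces of Eberlein compacta are closed: since $\ph(X_1)$ is a continuous image of pseudocompact $X_1$, hence pseudocompact, and $\cl{\ph(X_1)}$ is Eberlein, $\ph(X_1)$ must equal its closure.

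To close the cycle it remains to derive (c) from the others. Given (e), the restriction bijection $\hat K\to K:=\ph(X_1)$ is a homeomorphism (continuous bijection from a compact space to a Hausdorff one), and composing $\ph:X_1\to K$ with its inverse yields continuous $\hat\ph:X_1\to C_p(\be X_2)$, giving (a); the symmetric argument using $\la_{\sset 2}^X(\cF):X_2\to C_p(X_1)$ gives (b). To obtain the full extension to $\be X_1\times\be X_2$ I would reapply the same argument to the already-extended function $\hat\cF_1:X_1\times\be X_2\to\R$: now with $X_1$ pseudocompact and $\be X_2$ compact, the same reasoning in the first variable delivers a separately continuous extension to $\be X_1\times\be X_2$, which is (c).

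The main obstacle I anticipate is step $(d)\Rightarrow(e)$: pseudocompactness of a subspace of a compact Hausdorff space does not in general force closedness, so one must first establish the Eberlein structure of $\cl{\ph(X_1)}$ (exploiting pseudocompactness of $X_2$ a second time through the identification with a compact subset of $C_p(\be X_2)$) and then invoke the theorem that pseudocompact subspaces of Eberlein compacta are closed. Verifying uniform boundedness of the relevant compact subsets of $C_p(X_2)$ is the technical point where the Eberlein characterization actually kicks in.
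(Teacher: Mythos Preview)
The paper does not prove Proposition~\ref{p1}; it is imported verbatim from the author's earlier work (Proposition~3.1 of \cite{reznusp98} and Assertion~1.4 of \cite{rezn94}), with only the remark that Lemma~3.7 of \cite{reznusp98} is used to reach the present formulation. There is therefore no in-paper argument to compare your sketch against.

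That said, your outline assembles the right ingredients---Grothendieck's theorem for $C_p$ over a compactum, the fact that pseudocompact subspaces of Eberlein compacta are closed, and the bootstrap from $X_1\times\be X_2$ to $\be X_1\times\be X_2$---and would reproduce the result with two adjustments. First, the claim that a compact $K\subset C_p(X_2)$ ``lifts through the restriction bijection to a compact $\hat K\subset C_p(\be X_2)$'' is not automatic: you must show that evaluation at each $y\in\be X_2$ is continuous on $K$. The clean way is to apply Grothendieck to the dual evaluation map $X_2\to C_p(K)$ (its image is pseudocompact in $C_p$ of a compactum, hence has compact closure, so the map extends to $\be X_2$), which delivers exactly that continuity; this is also what makes $\cl{\ph(X_1)}$ Eberlein in your $(d)\Rightarrow(e)$ step. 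Second, your appeal to ``symmetry'' for $(a)\Leftrightarrow(b)$ is misplaced, since conditions (d)--(f) single out $\ph$ and are not symmetric in $X_1,X_2$. What actually happens is that $(e)\Rightarrow(b)$ is direct (compactness of $\ph(X_1)$ lets $\ph$ extend over $\be X_1$), while $(e)\Rightarrow(a)$ goes through the lifting lemma above; $(b)\Rightarrow(d)$ is immediate since the extended $\ph$ has compact image. With these two points in place your cycle closes and the bootstrap to $(c)$ is legitimate, because on $X_1\times\be X_2$ the transposed map $\be X_2\to C_p(X_1)$ already has compact image.
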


Theorem 3.15 and Claim 3.6 of \cite {reznusp98} imply the following proposition.

\begin {proposition}
Let $ X_1, X_2, ..., X_ {n-1} $ be countably compact spaces, let $ X_n $ be a pseudocompact space and let $ X = \prod_ {i = 1} ^ n X_i $.
Then any $ \cF \in SC (X) $ can be extended to a separately continuous function on $ \prod_ {i = 1} ^ n \be X_i $.
\end {proposition}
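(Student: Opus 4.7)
Set $Y = \prod_{i=1}^{n-1} X_i$ and $\hat Y = \prod_{i=1}^{n-1} \be X_i$, so $\hat Y$ is a compact Hausdorff space (though $Y$ itself need not be countably compact). The plan is to extend $\cF$ in two stages: first handle the $n-1$ countably compact factors while keeping $t \in X_n$ frozen, and then handle the pseudocompact factor $X_n$.

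\textbf{Stage 1 (slice extension).} For every $t \in X_n$, the slice $\cF_t := \cF(\cdot, t)$ lies in $SC(Y)$, a separately continuous function on a product of countably compact spaces. By Theorem~\ref{ti}(c), $\cF_t$ extends to $g_t \in SC(\hat Y)$. Define
\[
\tilde\cF : \hat Y \times X_n \to \R, \qquad \tilde\cF(\hat y, t) = g_t(\hat y).
\]
By construction, for each fixed $t$ and each $i<n$, $\tilde\cF$ is continuous in the $i$-th coordinate while the other coordinates of $\hat y$ are held fixed; in particular, $\tilde\cF$ is separately continuous in the first $n-1$ variables.

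\textbf{Stage 2 (continuity in the pseudocompact variable).} The core step is to show that for every $\hat y \in \hat Y$, the function $t \mapsto \tilde\cF(\hat y, t)$ is continuous on $X_n$. When $\hat y \in Y$ this is immediate since $\tilde\cF|_{Y\times X_n} = \cF$ and $\cF \in SC(X)$. To propagate the continuity from the dense subset $Y$ to all of $\hat Y$, I invoke Claim~3.6 of \cite{reznusp98}, which provides precisely a continuous-dependence-on-parameter property for the Grothendieck-type extension used in Stage~1; combined with the separate continuity established in Stage~1, this yields $\tilde\cF \in SC(\hat Y \times X_n)$ viewed as an $n$-variable function on $\prod_{i=1}^{n-1} \be X_i \times X_n$.

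\textbf{Stage 3 (extension to $\be X_n$).} Consider the adjoint map $\Psi = \la^{\hat Y \times X_n}_{\{n\}}(\tilde\cF) : X_n \to SC_p(\hat Y)$ defined by $\Psi(t) = g_t$. By Stage~2, $\Psi$ is continuous. Since $X_n$ is pseudocompact and each evaluation $\hat y \in \hat Y$ gives a continuous real-valued map $t \mapsto \Psi(t)(\hat y)$ on $X_n$, the image $\Psi(X_n)$ is pointwise bounded, and a Grothendieck-type argument (as in the proof of Proposition~\ref{p1}, applied through Theorem~3.15 of \cite{reznusp98}) shows that the closure of $\Psi(X_n)$ in $SC_p(\hat Y)$ is compact. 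The universal property of $\be X_n$ then supplies a continuous extension $\hat\Psi : \be X_n \to SC_p(\hat Y)$; transposing back via $\la$ gives the desired separately continuous extension $\hat\cF : \prod_{i=1}^n \be X_i \to \R$ with $\hat\cF|_X = \cF$.

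\textbf{Main obstacle.} Stage~2 is the crux: the extension $g_t$ produced in Stage~1 is only canonical up to whatever uniqueness the Grothendieck-type argument provides, and verifying that $g_t(\hat y)$ depends continuously on $t$ for arbitrary $\hat y \in \hat Y \setminus Y$ is exactly what Claim~3.6 of \cite{reznusp98} is designed to guarantee. Without that lemma, one would need to reprove the continuous dependence of the iterated coordinate-wise extension on the parameter $t$, which is the most delicate point of the whole argument.
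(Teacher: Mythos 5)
Your two-stage scheme (extend over the countably compact block via Theorem~\ref{ti}(c), then transpose into a function space over $\hat Y$ and use the universal property of $\be X_n$) is a plausible reconstruction, but as written the proof has genuine gaps at exactly the two places where all the work lies. Stage~2 is not an argument: you assert that Claim~3.6 of \cite{reznusp98} supplies the continuity of $t\mapsto g_t(\hat y)$ for $\hat y\in\hat Y\setminus Y$, but you do not state (and evidently do not know) what that claim actually says; continuity of the extended slices in the pseudocompact parameter at points outside $Y$ is essentially the separate continuity in the $n$-th variable that the proposition asserts, so deferring it to a citation of guessed content begs the question. Note that the paper itself gives no proof of this proposition either --- it quotes it as an immediate consequence of Theorem~3.15 and Claim~3.6 of \cite{reznusp98} --- and where the present paper does have to carry out this kind of propagation of continuity to points of a compactification (the Lemma inside the proof of Assertion~\ref{a1}), it takes a delicate argument through countable subsets, Eberlein compacta and the Preiss--Simon theorem, not a formal transposition; so this step cannot be waved through.

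Stage~3 is likewise unjustified as stated. Pointwise boundedness gives compactness of the closure of $\Psi(X_n)$ in $\R^{\hat Y}$ by Tychonoff; the substantive point is that this closure must lie inside $SC_p(\hat Y)$, i.e.\ every limit function must remain separately continuous (otherwise the transposed function on $\hat Y\times\be X_n$ is not separately continuous in the first $n-1$ variables). Your appeal to a Grothendieck-type argument does not cover this: Grothendieck's theorem concerns countably compact subsets of $C_p(X)$, whereas $\Psi(X_n)$ is only pseudocompact (a continuous image of a pseudocompact space) and the ambient space is $SC_p(\hat Y)$, not $C_p$ of anything. Bridging that double mismatch is precisely what Theorem~3.15 of \cite{reznusp98} and the Proposition~\ref{p1}/Eberlein-compactum machinery are for, and it is not supplied here. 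One positive remark: your worry about the canonicity of $g_t$ is unnecessary, since a separately continuous extension to $\prod_{i<n}\be X_i$ of a given function on $Y$ is unique when it exists, by a coordinate-by-coordinate density argument.
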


\begin {cor} \label {c1}
Let $ X_1$ and $X_2 $ be countably compact spaces, let $ X_3 $ be a pseudocompact space and let $ X = X_1 \times X_2 \times X_3 $.
Then any $ \cF \in SC (X) $ can be extended to a separately continuous function on $ \be X_1 \times \be X_2 \times \be X_3 $.
\end {cor}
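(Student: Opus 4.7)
The plan is to obtain the corollary as an immediate specialization of the preceding proposition, with no additional work.

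First I would observe that the statement of the corollary is literally the instance $n=3$ of the proposition stated just above. Indeed, under the substitution $n=3$, the hypotheses of the proposition become: $X_1, X_2$ (the first $n-1$ factors) are countably compact, $X_3$ (the last factor) is pseudocompact, and $X = X_1 \times X_2 \times X_3$. These are exactly the hypotheses of the corollary.

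Second, I would check that the conclusions match under the same substitution. The proposition concludes that every $\cF \in SC(X)$ extends to a separately continuous function on $\prod_{i=1}^{n}\be X_i$, which for $n=3$ is $\be X_1 \times \be X_2 \times \be X_3$. This is precisely the desired conclusion.

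Since the corollary is thus a direct pointwise instantiation of the proposition, there is no real obstacle; the one-line proof is simply ``Apply the preceding proposition with $n=3$.'' The only thing worth being mildly careful about is the order of factors, namely that the pseudocompact factor is listed last in both statements, so no relabeling or appeal to symmetry is required.
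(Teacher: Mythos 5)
Your proof is correct and matches the paper's intent: the corollary is stated as an immediate consequence of the preceding proposition, and taking $n=3$ there gives exactly the hypotheses (two countably compact factors, one pseudocompact factor last) and the conclusion (a separately continuous extension to $\be X_1 \times \be X_2 \times \be X_3$). No further argument is needed.
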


\begin {definition}
Let $ A $ be a set, let $ \set {X_ \al: \al \in A} $ be a family of spaces, and let $ X = \prod _ {\al \in A} X_ \al $.
Suppose given a function $ f: X \to \R $ and a positive integer  $ n $.
\begin {itemize}
\item (Definition 3.25 of \cite {reznusp98})
 The function $ f $ is {\it $ n $-separately continuous} iff $ r (f, X, y): \prod _ {\al \in B} X_ \al \to \R $  is continuous for each $ B \subset A $ with $ | B | \le n $ and any $ y \in \prod _ {\al \in A \setminus B} X_ \al $.
\item The function $ f $ is {\it $ n $-$ \be $-extendable} iff $ g = r (f, X, y): \prod _ {\al \in B} X_ \al \to \R $ extends to a separately continuous function $ \hat g: \prod _ {\al \in B} \be X_ \al \to \R $ for each $ B \subset A $ whith $ | B | \le n $ and any $ y \in \prod _ {\al \in A \setminus B} X_ \al $.
\end {itemize}
\end {definition}

Separately continuous functions are exactly $ 1 $-separately continuous functions.

\begin {proposition}[Proposition 1.12 of \cite {rezn94}; see also Theorem 3.27 of \cite {reznusp98}] \label {ecf2} 
Let $ X_1 $, $ X_2 $ be a pseudocompact space, $ \cF: X_1 \times X_2 \to \R $ a continuous function.
Then $ \cF $ extended to a separately continuous function on $ \be X_1 \times \be X_2 $.
\end {proposition}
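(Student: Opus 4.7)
Plan. I will verify condition (d) of Proposition~\ref{p1}: that $\cl{\ph(X_1)}$ is compact in $C_p(X_2)$, where $\ph = \la_{\sset 1}^X(\cF)$ is the map $x_1 \mapsto \cF(x_1, \cdot)$. Joint continuity of $\cF$ immediately gives continuity of $\ph : X_1 \to C_p(X_2)$, and pseudocompactness of $X_2$ forces each $\ph(x_1)$ to be bounded, hence uniquely extendable to $\tilde\ph(x_1) \in C(\be X_2)$. This produces a map $\tilde\ph : X_1 \to C_p(\be X_2)$ that composes with the continuous restriction $\rho : C_p(\be X_2) \to C_p(X_2)$ to recover $\ph$.

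First I would observe that $\tilde\ph(X_1)$ is pointwise bounded on $\be X_2$: at $z \in X_2$ this is because $\ph(X_1)$ is pseudocompact (continuous image of pseudocompact $X_1$) and evaluation at $z$ is continuous into $\R$, and at $z \in \be X_2 \setminus X_2$ it follows from uniform boundedness along ultrafilter limits. Hence the pointwise closure of $\tilde\ph(X_1)$ in $\R^{\be X_2}$ is compact.

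The crux, and where the hypotheses must really be combined, is to show this pointwise closure lies in $C(\be X_2)$: any pointwise limit $f$ of a net $\tilde\ph(x_1^\al)$ must be continuous on $\be X_2$. The plan is a Grothendieck-style double-limits argument. Passing to a subnet using compactness of $\be X_1$, one may assume $x_1^\al \to \xi$ in $\be X_1$; joint continuity of $\cF$ on $X_1 \times X_2$ together with pseudocompactness of both factors should then force the iterated limits—over $\al$ and over nets in $X_2$ approaching a point $z \in \be X_2$—to agree, making $f$ depend continuously on $z$. I expect this double-limits interchange to be the principal obstacle; it is essentially what substitutes here for the direct use of Grothendieck's theorem available when one of the factors is compact.

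Once this is established, $\tilde\ph$ is continuous, since for any convergent net $x_1^\al \to x_1$ in $X_1$ the pointwise limit of $\tilde\ph(x_1^\al)$ agrees with $\ph(x_1)$ on the dense set $X_2$ and is continuous on $\be X_2$, hence must coincide with $\tilde\ph(x_1)$ by uniqueness of continuous extension. Then $\cl{\tilde\ph(X_1)}$ is a compact subset of $C_p(\be X_2)$, and continuity of $\rho$ sends it to a compact subset of $C_p(X_2)$ containing $\ph(X_1)$. Thus $\cl{\ph(X_1)}$ is compact, condition (d) of Proposition~\ref{p1} holds, and $\cF$ extends to a separately continuous function on $\be X_1 \times \be X_2$.
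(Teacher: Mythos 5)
Your reduction to condition (d) of Proposition~\ref{p1} is legitimate, and the final bookkeeping (pushing a compact subset of $C_p(\be X_2)$ down to $C_p(X_2)$ by the restriction map) is fine; but this is not yet a proof, because the step you yourself flag as ``the principal obstacle'' is precisely the entire content of the proposition, and you only announce a plan for it. Note also that the present paper does not prove this statement: it imports it from \cite{rezn94} (Proposition 1.12; see also Theorem 3.27 of \cite{reznusp98}), so there is nothing inside the text to lean on. What you must show is that every pointwise cluster function on $\be X_2$ of the family $\{\tilde\ph(x_1):x_1\in X_1\}$ is continuous, and your sketch --- pass to a subnet with $x_1^\al\to\xi$ in $\be X_1$ and let joint continuity of $\cF$ ``together with pseudocompactness'' force the iterated limits to agree --- does not work as stated: joint continuity is a hypothesis on $X_1\times X_2$ only and gives no control at points $\xi\in\be X_1\setminus X_1$ or $z\in\be X_2\setminus X_2$, and pseudocompactness (unlike countable compactness) does not let you extract cluster points inside $X_1$ or $X_2$ for the nets involved. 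The known arguments turn a failed double-limit interchange into an infinite discrete (locally finite) family of nonempty open sets in one of the factors, contradicting pseudocompactness, or equivalently prove that $\ph(X_1)$ is an Eberlein compactum (conditions (e)--(f) of Proposition~\ref{p1}); some construction of this kind, in which joint continuity is used at a concrete point, is exactly what is missing. It cannot be soft: the statement fails for merely separately continuous $\cF$ on pseudocompact factors (item (b) following Theorem~\ref{ti}), so any correct argument must exploit joint continuity in an essential way.

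A secondary but genuine error: your justification of pointwise boundedness of $\{\tilde\ph(x_1)\}$ at points $z\in\be X_2\setminus X_2$ by ``uniform boundedness along ultrafilter limits'' is unfounded, because $\cF$ need not be bounded on $X_1\times X_2$. When $X_1\times X_2$ is not pseudocompact --- which does happen for pseudocompact, even countably compact, factors \cite{Novak1953} --- the Gliksberg theorem \cite{gli1959} provides an unbounded continuous $\cF$, and the proposition still applies to it; so there is no uniform bound, and boundedness of $\{\tilde\ph(x_1)(z)\}$ at growth points is only available a posteriori, as part of the same compactness analysis you have deferred. In short, the outer frame of your argument is a reasonable reduction, but the core of the theorem --- relative compactness of $\ph(X_1)$ in $C_p(X_2)$ under mere pseudocompactness of the factors --- is left unproved.
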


\section {Auxiliary Results}

Proposition \ref {ecf2} implies the following statement.

\begin {proposition} \label {ecf3}
Let $ X_1, X_2, ..., X_ {n} $ be pseudocompact spaces and let $ X = \prod_ {i = 1} ^ n X_i $.
Then any $ 2 $-separately continuous function on $ X $ is $ 2 $-$ \be $-extendable.
\end {proposition}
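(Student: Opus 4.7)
The plan is to reduce the statement directly to Proposition~\ref{ecf2} by unpacking the two definitions. Fix a $2$-separately continuous $\cF\colon X\to\R$. To show $\cF$ is $2$-$\be$-extendable, I must check the extension condition for every $B\subset\{1,\dots,n\}$ with $|B|\le 2$ and every $y\in\prod_{i\notin B}X_i$. The case $B=\varnothing$ is trivial (the function is a constant on a one-point space), so only $|B|=1$ and $|B|=2$ need argument.

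For $|B|=1$, say $B=\{i\}$, the function $g=r(\cF,X,y)\colon X_i\to\R$ is continuous by the definition of $2$-separate continuity (applied with this $B$). Since $X_i$ is pseudocompact, $g$ is bounded, so by the universal property of the Stone--\v{C}ech compactification it extends to a continuous function $\hat g\colon \be X_i\to\R$; continuity on a single factor is separate continuity, so this settles the $|B|=1$ case.

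For $|B|=2$, say $B=\{i,j\}$, the function $g=r(\cF,X,y)\colon X_i\times X_j\to\R$ is again continuous by $2$-separate continuity (this is exactly where the hypothesis is used nontrivially). Since $X_i$ and $X_j$ are pseudocompact, Proposition~\ref{ecf2} applies and gives a separately continuous extension $\hat g\colon \be X_i\times\be X_j\to\R$. This is precisely what the definition of $2$-$\be$-extendability demands.

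There is no real obstacle here: the statement is essentially the observation that $2$-separate continuity is just enough to make every $\le 2$-dimensional slice of $\cF$ a continuous function on a product of at most two pseudocompact factors, at which point Proposition~\ref{ecf2} does the work. The only thing to double-check when writing it out is that the slicing notation $r(\cF,X,y)$ and the role of the parameter $B$ in the two definitions line up, so that continuity of the slice (from one definition) feeds into the hypothesis of the extension result used for the other definition.
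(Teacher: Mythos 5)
Your proposal is correct and is exactly the argument the paper intends: the paper states the proposition as an immediate consequence of Proposition~\ref{ecf2}, and your write-up just unpacks the definitions — the $|B|=2$ slices are continuous on a product of two pseudocompact factors, so Proposition~\ref{ecf2} applies, while the $|B|\le 1$ cases are handled by boundedness of continuous functions on pseudocompact spaces and the universal property of $\be X_i$.
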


\begin {assertion} \label {a1}
Let $ X_1 $, $ X_2 $, and $ X_3 $ be pseudocompact spaces and let $ \cF: X_1 \times X_2 \times X_3 \to \R $  be a $2$-$\be$-extendable function.
Then $ \cF $ extends to a separately continuous function $ \hat \cF: \be X_1 \times \be X_2 \times \be X_3 \to \R $.
\end {assertion}
\begin {proof}
We set $ Y_1 = \be X_1 $, $ Y_2 = \bigcup \set {\clx {\be X_2} M: M \subset X_2, \, | X | \le \om} $ and $ Y_3 = X_3 $.
\begin {lemma*}
The function $\cF$ extends to separately continuous function $ \Psi: Y_1 \times Y_2 \times Y_3 \to \R $.
\end {lemma*}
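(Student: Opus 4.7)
The plan is to define $\Psi\colon Y_1\times Y_2\times Y_3\to\R$ by setting $\Psi(y_1,y_2,y_3) := \hat\cF_{y_3}(y_1,y_2)$, where $\hat\cF_{y_3}\colon \be X_1\times\be X_2\to\R$ is the separately continuous extension of $\cF(\cdot,\cdot,y_3)$ furnished by $2$-$\be$-extendability applied with $B=\{1,2\}$. Since $Y_1=\be X_1$, $Y_2\subset\be X_2$, and $Y_3=X_3$, the definition makes sense, and $\Psi$ agrees with $\cF$ on $X_1\times X_2\times X_3$. Separate continuity of $\Psi$ in $y_1$ and in $y_2$, with the other two variables fixed, is inherited directly from the separate continuity of $\hat\cF_{y_3}$; the nontrivial direction is continuity in $y_3\in X_3$.

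First I would record a compatibility identity that handles the case $y_2=x_2\in X_2$. Applying $2$-$\be$-extendability with $B=\{1,3\}$ produces, for each $x_2\in X_2$, a separately continuous extension $\hat g_{x_2}\colon \be X_1\times\be X_3\to\R$ of $\cF(\cdot,x_2,\cdot)$. The maps $y_1\mapsto\hat\cF_{y_3}(y_1,x_2)$ and $y_1\mapsto\hat g_{x_2}(y_1,y_3)$ are both continuous on $\be X_1$ and agree on the dense subset $X_1$, so they coincide. Consequently $y_3\mapsto\hat\cF_{y_3}(y_1,x_2) = \hat g_{x_2}(y_1,y_3)$ is continuous on $\be X_3$, and thus on $X_3$, for every $y_1\in\be X_1$ and $x_2\in X_2$.

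Now fix $y_1\in\be X_1$ and $y_2\in Y_2$, and pick a countable $M\subset X_2$ with $y_2\in\cl M$. Consider the function $H\colon X_2\times X_3\to\R$ defined by $H(x_2,y_3) := \hat\cF_{y_3}(y_1,x_2)$: by the previous paragraph, together with the continuity of $\hat\cF_{y_3}$ in the second variable, $H$ is separately continuous on the product of pseudocompact spaces $X_2\times X_3$. Proposition~\ref{p1}, applied with $X_2$ and $X_3$ in place of $X_1$ and $X_2$, reduces the existence of a separately continuous extension $\tilde H\colon\be X_2\times X_3\to\R$ of $H$ to the compactness of the closure of $\ph_H(X_2)$ in $C_p(X_3)$, where $\ph_H(x_2)(y_3):=H(x_2,y_3)$. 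Once such a $\tilde H$ is produced, uniqueness of continuous extensions in the second variable forces $\tilde H(y_2,y_3) = \hat\cF_{y_3}(y_1,y_2)$ on $\be X_2\times X_3$, and the separate continuity of $\tilde H$ delivers continuity of $y_3\mapsto\Psi(y_1,y_2,y_3)$ on $X_3$ for every $y_2\in\be X_2\supset Y_2$.

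The main obstacle is therefore to establish the compactness of $\cl{\ph_H(X_2)}$ in $C_p(X_3)$. The map $\ph_H$ is continuous, since $x_2\mapsto\hat\cF_{y_3}(y_1,x_2)$ is continuous in $x_2\in X_2$ for each $y_3\in X_3$ by the separate continuity of $\hat\cF_{y_3}$; hence $\ph_H(X_2)$ is at least a pseudocompact subspace of $C_p(X_3)$, but upgrading this to genuine compactness is the substantive step. I expect this to exploit the countable-closure structure of $Y_2$ in order to reduce to the separable compacta $\cl M\subset\be X_2$, and to combine the Grothendieck/Eberlein-type equivalences (d)--(f) of Proposition~\ref{p1} with the continuous dependence of $\hat g_{x_2}$ on $x_2\in X_2$ supplied by $2$-$\be$-extendability.
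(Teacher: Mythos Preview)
Your setup is correct and matches the paper's: define $\Psi(y_1,y_2,y_3)=\hat\Phi_{y_3}(y_1,y_2)$, observe that continuity in $y_1$ and $y_2$ is immediate, and reduce everything to continuity in $y_3$. Your compatibility identity $\hat\Phi_{y_3}(y_1,x_2)=\hat g_{x_2}(y_1,y_3)$ for $x_2\in X_2$ is also exactly what the paper uses (implicitly). But after that point your argument stalls at precisely the crucial step.

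You reduce continuity of $y_3\mapsto\Psi(y_1,y_2,y_3)$ to showing that $\cl{\ph_H(X_2)}$ is compact in $C_p(X_3)$, and then you do not prove this; you only speculate that the ``countable-closure structure of $Y_2$'' should help. Two problems. First, compactness of $\cl{\ph_H(X_2)}$ is genuinely nontrivial: the introduction exhibits separately continuous functions on products of pseudocompacta for which exactly this closure fails to be compact, so merely knowing $H\in SC(X_2\times X_3)$ and that $\ph_H(X_2)$ is pseudocompact gets you nothing. Second, your target is stronger than necessary: you are trying to extend $H$ to all of $\be X_2\times X_3$, whereas you only need continuity at each fixed $y_2\in Y_2$, i.e.\ at a point lying in the closure of some \emph{countable} $P\subset X_2$. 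The countability of $P$ is the whole point of introducing $Y_2$, and your outline never actually uses it.

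The paper's argument exploits countability directly and avoids the compactness-of-closure question altogether. Fix $(y_1,y_2)$ and a countable $P=\{p_n\}\subset X_2$ with $y_2\in\cl P$. For each $n$, the map $\ph_n\colon X_1\to C_p(X_3)$, $\ph_n(x)(z)=\Phi(x,p_n,z)$, has compact (indeed Eberlein-compact) image by Proposition~\ref{p1}. The diagonal $\ph=\Delta_n\ph_n$ maps $X_1$ into the countable product $E=\prod_n\ph_n(X_1)$, which is again Eberlein compact; since pseudocompact subsets of Eberlein compacta are closed, $\ph(X_1)$ is compact, so the extension $\hat\ph\colon\be X_1\to\ph(X_1)$ is onto and one can choose $y_1'\in X_1$ with $\hat\ph(y_1)=\ph(y_1')$. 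One checks that $h_3(z):=\Psi(y_1,y_2,z)$ equals $h_3'(z):=\Psi(y_1',y_2,z)$. A second, analogous step (now using $2$-$\be$-extendability at the genuine point $y_1'\in X_1$) produces $y_2'\in X_2$ with $h_3'=h_3''$, where $h_3''(z)=\Phi(y_1',y_2',z)$, which is continuous by separate continuity of $\Phi$. The missing idea in your outline is this point-substitution trick via the Eberlein-compactum diagonal; without it, the ``main obstacle'' you name remains an obstacle.
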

\begin {proof}
Let $ z \in Y_3 $.
Then $ z \in X_3 $ and the function $ \cF $ $ 2 $-$ \be $-extendable; therefore, the function
\[
g_ {z} = r (\cF, X_1 \times X_2 \times X_3, z): X_1 \times X_2 \to \R
\]
extends to a separately continuous function
\[
\hat g_ {z}: \be X_1 \times \be X_2 \to \R.
\]
Consider  the function $ \Psi: Y_1 \times Y_2 \times Y_3 \to \R $ defined by setting
\[
\Psi (x, y, z) = \hat g_ {z} (x, y)
\]
for $x\in Y_1$ and $y\in Y_2$.
Let us check that the function $ \Psi $ is separately continuous.
Given $ (y_1, y_2, y_3) \in Y_1 \times Y_2 \times Y_3 $ we must show that the functions
\begin {align*}
h_1 & = r (\Psi, Y_1 \times Y_2 \times Y_3, (y_2, y_3)): Y_1 \to \R,
\\
h_2 & = r (\Psi, Y_1 \times Y_2 \times Y_3, (y_1, y_3)): Y_2 \to \R,
\\
h_3 & = r (\Psi, Y_1 \times Y_2 \times Y_3, (y_1, y_2)): Y_3 \to \R
\end {align*}
are continuous. Since
\begin {align*}
h_1 (y_1') & = \Psi (y_1', y_2, y_3) = \hat g_ {y_3} (y_1', y_2),
\\
h_2 (y_2') & = \Psi (y_1, y_2', y_3) = \hat g_ {y_3} (y_1, y_2')
\end {align*}
for $ y_1'\in Y_1 $ and $ y_2' \in Y_2 $ and $ \hat g_ {y_3} $ is a separately continuous function,
it follows that the functions $ h_1 $ and $ h_2 $ are continuous.

It remains to check the continuity of the function $ h_3 $. There is a countable $ P \subset Y_2 $ such that $ y_2 \in \cl P $. Let $ P = \set {p_n: n \in \om} $. We put
\begin {align*}
f_n & = r (\cF, X_1 \times X_2 \times X_3, (p_n)): X_1 \times X_3 \to \R,
\\
f_n (x, y) & = \cF (x, p_n, y) \text {for} (x, y) \in X_1 \times X_3,
\\
\ph_n & = \la _ {\sset 1} ^ {X_1 \times X_3} (f_n): X_1 \to C_p (X_3).
\end {align*}
Since the function $ \cF$ is $2 $-$ \be $-extendable, it follows that $ f_n $
extends to a separately continuous function
$ \hat f_n: \be X_1 \times \be X_3 \to \R $.
Proposition \ref {p1} implies that $ \ph_n (X_1) $ is an Eberlein compactum.
Let 
\[
\hat \ph_n: \be X_1 \to \ph_n (X_1)
\]
be a continuous extension of the map $ \ph_n $.
We put
\begin {align*}
E & = \prod_ {n \in \om} \ph_n (X_1),
\\
\ph & = \mathop {\Delta} \limits_ {n \in \om} \ph_n: X_1 \to P.
\end {align*}
The space $E$ is a countable product of Eberlein compacta and therefore is an Eberlein compactum. The mapping $ \ph $ is continuous; hence $ \ph (X_1) $ is a pseudocompact subspace of an Eberlein compactum. Since the pseudocompact subspaces of Eberlein compacts are closed (see Corollary 6 of \cite {ps1974}), $ \ph (X_1) $ is an Eberlein compactum.
Let 
\[
\hat \ph: \be X_1 \to E
\]
be a continuous extension of the map $ \ph $. Since $ \ph (X_1) $ is compact, we have
$ \ph (X_1) = \hat \ph (\be X_1) $. Thus, there is a $ y_1'\in X_1 $ such that $ \ph (y_1') = \hat \ph (y_1) $. Note that
\[
\ph_n (y_1') = \hat \ph_n (y_1)
\]
for each $ \in \om $.
We put
\[
h_3'= r (\Psi, Y_1 \times Y_2 \times Y_3, (y_1', y_2)): Y_3 \to \R.
\]

Let us check that $ h_3 = h_3'$. Suppose that , on the contrary, $ h_3 \neq h_3'$.
Then $ h_3 (z) \neq h_3'(z) $ for some $ z \in X_3 $.
Since
\begin {align*}
h_3 (z) & = \Psi (y_1, y_2, z) = \hat g_z (y_1, y_2)
\\
h_3'(z) & = \Psi (y_1', y_2, z) = \hat g_z (y_1', y_2)
\end {align*}
in follows that 
\[
\hat g_z (y_1, y_2) \neq \hat g_z (y_1', y_2), 
\]
and since the function $ \hat g_z $ is separately continuous and $ y_2 \in \cl P $, it follows that 
\[
\hat g_z (y_1, p_n) \neq \hat g_z (y_1', p_n)
\]
for some $ n \in \om $.
Finally, since
\begin {align*}
\hat g_z (y_1, p_n) & = \hat \ph_n (y_1) (z),
\\
g_z (y_1', p_n) & = \ph_n (y_1') (z),
\end {align*}
it follows that $ \hat \ph_n (y_1) (z) \neq \ph_n (y_1') (z) $. This contradicts the fact that $ \ph_n (y_1') = \hat \ph_n (y_1) $
for each $ n \in \om $.

So, we have shown that $ h_3 = h_3'$. We put
\begin {align*}
q & = r (\cF, X_1 \times X_2 \times X_3, (y_1')): X_2 \times X_3 \to \R,
\\
q (x, y) & = \cF (y_1', x, y) \text { for } (x, y) \in X_2 \times X_3,
\\
\psi & = \la _ {\sset 2} ^ {X_2 \times X_3} (q): X_2 \to C_p (X_3).
\end {align*}
The function $ \cF $ is $ 2 $-$ \be $-extendable and hence the function $ q $
extends to a separately continuous function
$ \hat q: \be X_2 \times \be X_3 \to \R $.
Proposition \ref {p1} implies that $ \psi (X_2) $ is an Eberlein compactum.
Let 
\[
\hat \psi: \be X_2 \to \psi (X_2)
\]
be a continuous extension of the mapping $ \psi $. Since $ \psi (X_2) = \hat \psi (\be X_2) $,
we have $ \psi (y_2') = \hat \psi (y_2) $ for some $ y_2' \in X_2 $.
We put
\[
h_3'' = r (\Psi, Y_1 \times Y_2 \times Y_3, (y_1', y_2')): Y_3 \to \R.
\]
Let us check that $ h_3 = h_3'= h_3''$. Suppose that, on the contrary, $ h_3'\neq h_3''$.
Then $ h_3'(z) \neq h_3''(z) $ for some $ z \in X_3 $.
Since
\begin {align*}
h_3'(z) & = \Psi (y_1', y_2, z) = \hat g_z (y_1', y_2),
\\
h_3'' (z) & = \Psi (y_1', y_2, z) = \hat g_z (y_1', y_2'),
\end {align*}
we have
\[
\hat g_z (y_1', y_2) \neq \hat g_z (y_1', y_2'),
\]
and since
\begin {align*}
\hat g_z (y_1', y_2) & = \hat \psi (y_2) (z),
\\
\hat g_z (y_1', y_2') & = \psi (y_2') (z),
\end {align*}
we have $ \hat \psi (y_2) (z) \neq \psi (y_2') (z) $. This contradicts the fact that $ \psi (y_2') = \hat \psi (y_2) $.

Thus, we have shown that $ h_3 = h_3'= h_3''$. Since $ y_1'\in X_1 $, $ y_2' \in X_2 $, and $ X_3 = Y_3 $,
we have
\[
h_3'' = r (\cF, X_1 \times X_2 \times X_3, (y_1', y_2')): X_3 \to \R.
\]
Since the function $ \cF $ is separately continuous, it follows that the function $ h_3 = h_3'' $ is continuous.
\end {proof}
Let $ \Psi: Y_1 \times Y_2 \times Y_3 \to \R $ be a separately continuous extension of the function $ \cF $.
Since $ Y_1 $ is compact, $ Y_2 $ is countably compact and $ Y_3 $ is pseudocompact, it follows from the Corollary \ref {c1} that the function $ \Psi $ can be extended to a separately continuous function $ \hat \cF: \be Y_1 \times \be Y_2 \times \be Y_3 \to \R $. To prove the assertion, it remains to note that $ \be X_1 = \be Y_1 $, $ \be X_2 = \be Y_2 $, and $ \be X_3 = \be Y_3 $.
\end {proof}

In view of Proposition \ref {ecf3}, it follows that the Assertion \ref {a1} generalizes Theorem 3.27 of \cite {reznusp98}.

\begin {assertion} \label {a2}
Let $ A $ be a set and let $ X_ \al $ be a pseudocompact space for each $ \al \in A $. Then any $2$-$\be$-extendable function
\[
\cF: X = \prod _ {\al \in A} X_ \al \to \R
\]
is a $3$-$\be $-extendable function.
\end {assertion}
\begin {proof}
Let $ B \subset A $, $ | B | \leq 3 $, and let $ y \in \prod _ {\al \in A \setminus B} X_ \al $. The function
\[
f = r (X, \cF, y): \prod _ {\al \in B} X_ \al \to \R.
\]
is a $2$-$ \be $-extendable and $ | B | \leq 3 $; therefore, Assertion \ref {a1} implies that the function $ f $ can be extended to a separately continuous function
\[
\hat f: \prod _ {\al \in B} \be X_ \al \to \R.
\]
\end {proof}

\begin {assertion} \label {a3}
Suppose given a set $A$, spaces $ X_ \al $ for all $ \al \in A $, 
a positive integer $ m $, and a set
$ B \subset A $ with $ | B | <m $. Let
\[
Y_ \al =
\begin {cases}
X_ \al & \text{if } \al \notin B \\
\be X_ \al & \text{if } \al \in B \\
\end {cases}
\]
for each $ \al \in A $ and let
\[
X = \prod _ {\al \in A} X_ \al,
\qquad
Y = \prod _ {\al \in A} Y_ \al.
\]
Then any $ m $-$ \be $-extendable 
function $ \cF: X \to \R $ extends to an $ l $-$ \be $-extendable 
function $ \Psi: Y \to \R $, where $l = m - |B|$.
\end {assertion}
\begin {proof}
Let us define the function $ \Psi $. For $ y \in Y $, we set $ u = \pi_B (y) $,
\[
v = \pi_ {A \setminus B} (y) \in \prod _ {\al \in A \setminus B} X_ \al,
\qquad
f = r (\cF, X, v): \prod _ {\al \in B} X_ \al \to \R.
\]
Since $ \cF $ is $ m $-$ \be $-extendable, the function $ f $ extends to a separately continuous function $ \hat f: \prod _ {\al \in B} \be X_ \al \to \R $.  We define the function $\Psi$ by setting 
 $ \Psi (y) = \hat f (u) $. 

Let us check that $ \Psi $ is an $ l $-$ \be $-extendable function. Let $ C \subset A $, $ | C | \leq l $.
For $ z \in \prod _ {\al \in A \setminus C} Y_ \al $ consider the function
\[
 g = r (\Psi, Y, z): \prod _ {\al \in C} Y_ \al \to \R.
\]
It is enough to prove that the function $ g $ extends to a separately continuous function $ q: \prod _ {\al \in C} \be Y_ \al \to \R $. Note that $ \be X_ \al = \be Y_ \al $ for $ \al \in A $, so that $ \prod _ {\al \in C} \be Y_ \al = \prod _ {\al \in C} \be X_ \al $. We set $ D = B \cup C $ and $ y '= \pi_ {A \setminus D} (y) $ and consider
\[
 h = r (\cF, X, y '): \prod _ {\al \in D} X_ \al \to \R.
\]
Since $ | D | \leq m $ and $ \cF $  is an $m$-$\be$-extendable function, it follows that  $ h $ can be extended to a separately continuous function
$ \hat h: \prod _ {\al \in D} \be X_ \al \to \R $. We put
\[
q = r (\prod _ {\al \in D} \be X_ \al, \hat h, \pi_ {D \setminus C} (y)): \prod _ {\al \in C} \be X_ \al \to \R.
\]
Since $ \hat h $ is a separately continuous function, $ q $ is separately continuous as well. By construction, $ q $ extends $ g $.
\end {proof}

\begin {cor} \label {c2}
Suppose given a set $A$, spaces $ X_ \al $ is the space for $ \al \in A $, and $ \ga \in A $. Let
\[
Y_ \al =
\begin {cases}
X_ \al &\text{if } \al \neq \ga, \\
\be X_ \al &\text{if } \al = \ga \\
\end {cases}
\]
for  each $ \al \in A $ and let
\[
X = \prod _ {\al \in A} X_ \al,
\qquad
Y = \prod _ {\al \in A} Y_ \al.
\]
Then any $ 3 $-$ \be $-extendable 
function $ \cF: X \to \R $ can be extended to a $ 2 $-$ \be $-extendable function $ \Psi: Y \to \R $.
\end {cor}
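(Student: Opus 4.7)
The plan is to observe that Corollary \ref{c2} is essentially the special case $|B|=1$, $m=3$ of Assertion \ref{a3}, so the proof should be a one-line application of that assertion rather than a fresh argument.

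Concretely, I would set $B = \{\ga\}$ and $m = 3$ in Assertion \ref{a3}. Then $|B| = 1 < 3 = m$, the condition of that assertion is satisfied, and the auxiliary family $Y_\al$ constructed in the assertion coincides on the nose with the family defined in the statement of the corollary (both modify only the $\ga$-th factor, replacing $X_\ga$ by $\be X_\ga$ and leaving all other coordinates untouched). The conclusion of Assertion \ref{a3} then delivers, for any $3$-$\be$-extendable function $\cF : X \to \R$, an extension $\Psi : Y \to \R$ that is $l$-$\be$-extendable with
\[
l = m - |B| = 3 - 1 = 2,
\]
which is exactly what the corollary asserts.

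Because this is a direct specialization, there is no real obstacle: I do not need to reopen the construction of $\Psi$ from $\cF$ (restriction-and-extension slice by slice over the $\ga$-coordinate), nor to reverify that the restriction of the resulting $\Psi$ to any $\le 2$-dimensional face extends separately continuously to the corresponding product of Stone-\v Cech compactifications. Both facts are already proved inside Assertion \ref{a3}. The only thing worth writing out is the identification of the two $Y$-families and the arithmetic $l = 3 - 1 = 2$. Thus the proof I would record is simply: \emph{Apply Assertion \textup{\ref{a3}} with $B = \{\ga\}$ and $m = 3$.}
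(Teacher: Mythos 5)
Your proposal is correct and is exactly how the paper intends the corollary to be read: it is the special case $B=\{\ga\}$, $m=3$ of Assertion~\ref{a3}, giving $l=m-|B|=2$, and the paper accordingly gives no separate proof.
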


Assertion \ref {a2} and Corollary \ref {c2} imply the following assertion.

\begin {assertion} \label {a4}
Suppose given a set $A$, pseudocompact spaces 
$ X_ \al $ for all  $ \al \in A $ and $ \ga \in A $,
\[
Y_ \al =
\begin {cases}
X_ \al &\text{if } \al \neq \ga, \\
\be X_ \al &\text{if } \al = \ga \\
\end {cases}
\]
for each $ \al \in A $ and let
\[
X = \prod _ {\al \in A} X_ \al,
\qquad
Y = \prod _ {\al \in A} Y_ \al.
\]
Then any $ 2 $-$ \be $-extendable
function $ \cF: X \to \R $ can be extended to a $2$-$\be$-extendable function $ \Psi: Y \to \R $.
\end {assertion}

\section {Main results}

\begin {theorem} \label {t1}
Let $ X_1 $, $ X_2 $, ..., $ X_n $ be pseudocompact spaces and let $ \cF: \prod_ {i = 1} ^ n X_i \to \R $  be a $2$-$\be$-extendable function.
Then $ \cF $ extends to a separately continuous function $ \hat \cF: \prod_ {i = 1} ^ n \be X_i \to \R $.
\end {theorem}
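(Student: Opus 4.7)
The plan is to iterate Assertion \ref{a4} once per factor, successively replacing each $X_i$ with $\be X_i$ while preserving the $2$-$\be$-extendability of the extension.

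Concretely, for $k=0,1,\ldots,n$, introduce the intermediate product
\[
Y^{(k)} = \be X_1 \times \cdots \times \be X_k \times X_{k+1} \times \cdots \times X_n,
\]
so that $Y^{(0)} = \prod_{i=1}^n X_i$ and $Y^{(n)} = \prod_{i=1}^n \be X_i$. Note that each factor of $Y^{(k)}$ is pseudocompact: either it is one of the original $X_i$ (pseudocompact by hypothesis) or it is a Stone-Cech compactification $\be X_i$ (compact, hence pseudocompact). This is the condition needed to apply Assertion \ref{a4} at every stage.

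By induction on $k$, I would construct a $2$-$\be$-extendable function $\Psi_k : Y^{(k)} \to \R$ extending $\cF$. The base case is $\Psi_0 = \cF$, which is $2$-$\be$-extendable by hypothesis. For the inductive step, suppose $\Psi_k$ is already defined. Regarding $Y^{(k)}$ as a product over the index set $A = \{1,\ldots,n\}$ with $\ga = k+1$, Assertion \ref{a4} yields a $2$-$\be$-extendable extension $\Psi_{k+1} : Y^{(k+1)} \to \R$ of $\Psi_k$, which therefore extends $\cF$ as well.

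Setting $\hat\cF = \Psi_n$, we obtain a $2$-$\be$-extendable function on $\prod_{i=1}^n \be X_i$ that extends $\cF$. Any $2$-$\be$-extendable function is in particular $1$-separately continuous, i.e.\ separately continuous, so $\hat\cF$ has the required properties. The main obstacle in principle would be ensuring that iteratively compactifying factors does not destroy the hypotheses of Assertion \ref{a4}, but this is trivial once one observes that compactness implies pseudocompactness; the real work has already been absorbed into Assertions \ref{a1}--\ref{a4}.
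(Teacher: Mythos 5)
Your proof is correct and is essentially the paper's own argument: the paper likewise constructs the chain $Y_0,\dots,Y_n$ of intermediate products and applies Assertion \ref{a4} once per factor to obtain $2$-$\be$-extendable extensions $\Psi_l$, finishing with $\hat\cF=\Psi_n$. The only cosmetic difference is that the paper dispatches $n\le 3$ directly via Assertion \ref{a1}, whereas your uniform induction (together with the observation that $2$-$\be$-extendability implies separate continuity) covers all $n$ at once.
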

\begin {proof}
If $ n \leq 3 $, then the theorem follows from Assertion \ref {a1}. In what follows, we will assume that $ n> 3 $. We put
\[
Y_ {l, m} =
\begin {cases}
\be X_m &\text{if }  m \leq l, \\
 X_m &\text{if }  m> l \\
\end {cases}
\]
for $ l = 0,1, ..., n $ and $ m = 1, ..., n $ and
$
Y_l = \prod_ {m = 1} ^ n Y_ {l, m}
$
for $ l = 0,1, ..., n $. Note that
\[
Y_0 = \prod_ {i = 1} ^ n X_i
\text { and }
Y_n = \prod_ {i = 1} ^ n \be X_i.
\]
By induction on $ l $ we define 2-$\be$-extendable mappings $ \Psi_l: Y_l \to \R $ for $ l = 0,1, ..., n $ in such a way that $ \Psi_ {l} $ extends $ \Psi_ {l-1} $ for $ l> 0 $ and $ \Psi_0 = \cF $.

For $ l = 0 $, we put $ \Psi_0 = \cF $.

Suppose that  $ l> 0 $ and a 2-$\be$-extendable mapping $ \Psi_ {l-1}: Y_ {l-1} \to \R $ is constructed. Each $ Y_ {l, m} $ is pseudocompact,
$ Y_ {l, m} = Y_ {l-1, m} $ for $ l \neq m $, and $ Y_ {m, m} = \be Y_ {m-1, m} = \be X_m $. Assertion \ref {a4} implies that the function $ \Psi_ {l-1} $ can be extended to a 2-$\be$-extendable mapping $ \Psi_l: Y_l \to \R $. This completes the inductive construction.

By construction, each $ \Psi_l $ is separately continuous and extends $ \cF $. Therefore, the mapping
\[
\hat \cF = \Psi_n: \prod_ {i = 1} ^ n \be X_i \to \R
\]
is separately continuous and extends $\cF$.
\end {proof}

Theorem \ref {t1} and Proposition \ref {ecf3} imply the following theorem.

\begin {theorem} \label {t2}
Let $ X_1 $, $ X_2 $, ..., $ X_n $ be pseudocompact spaces, and let $ \cF: \prod_ {i = 1} ^ n X_i \to \R $ be a $2$-separately continuous function.
Then $ \cF $ extends to a separately continuous function $ \hat \cF: \prod_ {i = 1} ^ n \be X_i \to \R $.
\end {theorem}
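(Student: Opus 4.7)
The plan is to observe that this theorem is an immediate corollary of the two results just stated, namely Proposition \ref{ecf3} and Theorem \ref{t1}, which together form a complete chain from $2$-separate continuity to separately continuous extension on the product of Stone-Čech compactifications.

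First I would apply Proposition \ref{ecf3}: since $X_1, \ldots, X_n$ are pseudocompact and $\cF$ is $2$-separately continuous, this proposition directly yields that $\cF$ is $2$-$\be$-extendable. Explicitly, for every $B \subset \{1, \ldots, n\}$ with $|B| \leq 2$ and every $y \in \prod_{i \notin B} X_i$, the restricted function $r(\cF, X, y)$ is continuous on a product of at most two pseudocompact factors, and hence (by the two-factor result of Proposition \ref{ecf2}) extends to a separately continuous function on the product of the corresponding $\be X_i$.

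Next I would apply Theorem \ref{t1} to this now-$2$-$\be$-extendable function $\cF$: since the $X_i$ are pseudocompact, Theorem \ref{t1} supplies a separately continuous extension $\hat\cF : \prod_{i=1}^n \be X_i \to \R$, which is exactly the conclusion required.

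Since both ingredients are already established in the excerpt, there is no real obstacle: the argument is a one-step chaining of Proposition \ref{ecf3} (to upgrade hypothesis on $\cF$ from $2$-separate continuity to $2$-$\be$-extendability) and Theorem \ref{t1} (to produce the global separately continuous extension). The only thing to verify is that the hypothesis of Proposition \ref{ecf3} and the hypothesis of Theorem \ref{t1} match up in the indexing of factors, which is immediate since both are stated for the same finite product $\prod_{i=1}^n X_i$ of pseudocompact spaces.
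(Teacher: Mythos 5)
Your proposal is correct and coincides with the paper's argument: the paper obtains Theorem \ref{t2} exactly by combining Proposition \ref{ecf3} (to pass from $2$-separate continuity to $2$-$\be$-extendability) with Theorem \ref{t1}.
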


A continuous function on a product spaces is $2$-separately continuous, so Theorem \ref {t2} implies the following result.

\begin {theorem} \label {t3}
Let $ X_1 $, $ X_2 $, ..., $ X_n $ be pseudocompact spaces and let $ \cF: \prod_ {i = 1} ^ n X_i \to \R $ be a continuous function.
Then $ \cF $ extends to a separately continuous function $ \hat \cF: \prod_ {i = 1} ^ n \be X_i \to \R $.
\end {theorem}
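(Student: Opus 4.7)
The plan is to reduce the statement directly to Theorem~\ref{t2}. The crucial observation is that continuity on a product is strictly stronger than $2$-separate continuity: given any $B \subset \sset{1,2,\ldots,n}$ with $|B| \le 2$ and any $y \in \prod_{\al \notin B} X_\al$, the slice $r(\cF, X, y)$ is the composition of $\cF$ with the continuous section $p(X, y)$ that inserts $y$ on the complementary coordinates and is therefore continuous. Hence $\cF$ is $2$-separately continuous, and Theorem~\ref{t2} immediately produces the desired separately continuous extension $\hat\cF: \prod_{i=1}^n \be X_i \to \R$.

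This reduction is essentially a one-liner, so the real substance of the argument lies upstream rather than in the final step. If Theorem~\ref{t2} were not available, I would still begin with the same observation to land inside the class of $2$-separately continuous functions, then invoke Proposition~\ref{ecf3} to promote $2$-separate continuity to $2$-$\be$-extendability (which in effect applies the two-factor extension theorem, Proposition~\ref{ecf2}, slice by slice to each pairwise projection). Theorem~\ref{t1} would then take over and deliver the separately continuous extension on the full product of Stone-Cech compactifications.

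The genuine obstacle in this whole chain is Theorem~\ref{t1}. Its proof proceeds by induction, replacing $X_\ga$ by $\be X_\ga$ one index at a time while Assertion~\ref{a4} preserves $2$-$\be$-extendability at each step. Assertion~\ref{a4} itself rests on the three-factor Assertion~\ref{a1}, the deepest result of the paper, whose proof goes through the Eberlein-compact characterization of Proposition~\ref{p1}. So while the step from Theorem~\ref{t3} to Theorem~\ref{t2} is almost immediate, the main obstacle in the overall program sits at this three-factor assertion and the careful bookkeeping needed to propagate it to arbitrary $n$.
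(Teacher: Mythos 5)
Your reduction is exactly the paper's own argument: continuity of $\cF$ implies $2$-separate continuity (each slice $r(\cF,X,y)$ is continuous), and Theorem~\ref{t2} then yields the separately continuous extension. The proposal is correct and matches the paper's proof, including your accurate observation that the real work lies in Theorem~\ref{t1} and Assertion~\ref{a1}.
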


\bibliographystyle{elsarticle-num}
\bibliography{scfps}

\end{document}